\newtheorem{theorem}{Theorem}[section]
\newtheorem{theoreme}{Th\'eor\`eme}[section]
\newtheorem{lemma}{Lemma}[section]
\numberwithin{equation}{section}
\newcommand{\R}{\mathbb{R}}
\newcommand{\Z}{\mathbb{Z}}
\newcommand{\T}{\mathbb{T}}
\begin{document}
\title[Strichartz estimates]{Strichartz estimates for the periodic non elliptic Schr\"odinger equation }

\author[N.Godet]{Nicolas Godet}
\address{CNRS \& Laboratoire de Math\'ematiques (UMR 8088), Universit\'e de Cergy-Pontoise, F-95000 Cergy-Pontoise, France.}
\email{nicolas.godet@u-cergy.fr}

\author[N. Tzvetkov]{Nikolay Tzvetkov}
\address{CNRS \& Laboratoire de Math\'ematiques (UMR 8088), Universit\'e de Cergy-Pontoise, F-95000 Cergy-Pontoise, France.}
\email{nikolay.tzvetkov@u-cergy.fr}

\maketitle
\begin{abstract}
Le but de cette note est de d\'emontrer des estimations de Strichartz optimales avec pertes de d\'eriv\'ees pour l'\'equation de Schr\"odinger non elliptique pos\'ee sur le tore de dimension $2$.
\end{abstract}

\vspace{-0.5cm}

\selectlanguage{english}

\begin{abstract}
The purpose of this note is to prove sharp Strichartz estimates with derivative losses for the non elliptic Schr\"odinger equation posed on the $2$ dimensional torus.
\end{abstract}

\vspace{1cm}

\selectlanguage{francais}
\textbf{Version fran\c caise abr\'eg\'ee}

\medskip

On consid\`ere l'\'equation de Schr\"odinger non elliptique 
\begin{equation} \label{eq0}
(i\partial_t +\partial_x^2-\partial_y^2)u=0,\quad u(0,x,y)=u_0(x,y),
\end{equation}
pos\'ee sur le tore de dimension deux $\T^2=(\R| 2 \pi \Z)^2$. La solution de cette equation de donn\'ee initiale $u_0$ est donn\'ee par $e^{-it P}u_0$ o\`u $P= -\partial_x^2+ \partial_y^2$. Un couple de nombres r\'eels $(p,q)$ est dit  admissible si 
$$
\frac{1}{p}+\frac{1}{q}=\frac{1}{2},\quad p>2.
$$
Le th\'eor\`eme suivant donne des estimations de Strichartz optimales avec pertes de d\'eriv\'ees pour l'op\'erateur $P$. 

\begin{theoreme} \label{thm0}
Soit $(p,q)$ un couple admissible. Il existe une constante $C>0$ telle que pour toute donn\'ee initiale $u_0 \in H^{\frac{1}{p}}(\T^2)$,
\begin{equation} \label{strichartz0}
\|e^{-itP}u_0\|_{L^p_{t\in [0,1]}L^q_{xy}(\T^2)}\leq C\|u_0\|_{H^{\frac{1}{p}}(\T^2)}\,.
\end{equation}
De plus, l'estimation (\ref{strichartz0}) est optimale au sens o\`u l'in\'egalit\'e 
 \begin{equation}
\|e^{-itP}u_0\|_{L^p_{t\in [0,1]}L^q_{xy}(\T^2)}\leq C\|u_0\|_{H^{s}(\T^2)}
\end{equation}
devient fausse si $s<\frac{1}{p}$. 
\end{theoreme}

L'estimation de Strichartz (\ref{strichartz0}) a \'et\'e r\'ecemment d\'emontr\'ee dans le cas $p=q=4$ (voir \cite{W}) en utilisant une analyse globale sur le tore. La preuve du Th\'eor\`eme \ref{thm0} que nous pr\'esentons est uniquement bas\'ee sur des arguments locaux. Le r\'esultat du Th\'eor\`eme \ref{thm0} peut \^ etre utilis\'e dans l'\'etude de perturbations non lin\'eaires de (\ref{eq0}). En particulier, l'analyse de \cite{BGT1}, \cite{BGT2} implique que dans le cas de perturbations cubiques, l'\'equation est bien pos\'ee dans $H^s$, $s>1/2$.
Des estimations de Strichartz avec pertes dans le cas elliptique
\begin{equation}\label{ell0}
(i\partial_t +\partial_x^2+\partial_y^2)u=0,\quad u(0,x,y)=u_0(x,y),
\end{equation}
pos\'ee sur $\T^2$, ont \'et\'e obtenues dans \cite{B}. Pour l'\'equation (\ref{ell0}), on ne conna\^it pas les estimations optimales pour tous les couples $(p,q)$ admissibles (surtout pour $p<4$) mais l'analyse de \cite{B} montre que dans le cas $p=q=4$, on a mieux que (\ref{strichartz0}), \`a savoir que $1/p=1/4$ peut \^etre remplac\'e par n'importe quel nombre strictement positif.

\selectlanguage{english}

\section{Introduction}
Consider the non elliptic Schr\"odinger equation
\begin{equation} \label{eq}
(i\partial_t +\partial_x^2-\partial_y^2)u=0,\quad u(0,x,y)=u_0(x,y),
\end{equation}
posed on the two dimensional torus $\T^2=(\R| 2 \pi \Z)^2$. 
The solution of (\ref{eq}) is given by $e^{-itP}(u_0)$, where $P= -\partial_x^2+\partial_y^2$.
We study here Strichartz estimates with losses for (\ref{eq}) and we show that the approach of \cite{BGT1} gives optimal estimates.
We call a couple $(p,q)\in\R^2$ admissible if
$$
\frac{1}{p}+\frac{1}{q}=\frac{1}{2},\quad p>2.
$$
We have the following statement.
\begin{theorem}\label{thm}
Let $(p,q)$ be an admissible couple. There exists a constant $C>0$ such that for every $u_0\in H^{\frac{1}{p}}(\T^2)$,
\begin{equation}\label{strichartz}
\|e^{-itP}u_0\|_{L^p_{t\in [0,1]}L^q_{xy}(\T^2)}\leq C\|u_0\|_{H^{\frac{1}{p}}(\T^2)}\,.
\end{equation}
Moreover (\ref{strichartz}) is sharp in the sense that the estimate 
 \begin{equation}\label{faux}
\|e^{-itP}u_0\|_{L^p_{t\in [0,1]}L^q_{xy}(\T^2)}\leq C\|u_0\|_{H^{s}(\T^2)}
\end{equation}
fails for $s<\frac{1}{p}$.
\end{theorem}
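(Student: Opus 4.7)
The plan is to split the theorem into the positive direction (\ref{strichartz}) and the sharpness claim (\ref{faux}), and to treat them by independent methods: a Littlewood--Paley decomposition combined with a semiclassical Strichartz reduction following \cite{BGT1} for the former, and an explicit extremizer supported in the null directions of the non elliptic symbol for the latter.

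For (\ref{strichartz}), I would first perform a Littlewood--Paley decomposition $u_0=\sum_{N\in 2^{\N}}u_N$ with $u_N$ spectrally localized in the annulus $\{|n|\sim N\}\subset\Z^2$. Since $p,q\geq 2$, Minkowski's inequality together with the standard square function estimate on $\T^2$ reduces (\ref{strichartz}) to the frequency-localized bound
\[
\|e^{-itP}u_N\|_{L^p_{t\in [0,1]}L^q_{xy}(\T^2)} \leq CN^{\frac{1}{p}}\|u_N\|_{L^2(\T^2)}.
\]
The core step is a semiclassical Strichartz estimate: for every interval $I\subset\R$ with $|I|\leq N^{-1}$,
\[
\|e^{-itP}u_N\|_{L^p_{t\in I}L^q_{xy}(\T^2)} \leq C\|u_N\|_{L^2(\T^2)}.
\]
I would prove this by constructing a WKB parametrix for $e^{-itP}$ on the time scale $|t|\lesssim N^{-1}$ in the spirit of \cite{BGT1}. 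Since the principal symbol $p(\xi,\eta)=\xi^2-\eta^2$ has non-degenerate Hessian ($\det=-4$), the parametrix enjoys the same $|t|^{-1}$ dispersive decay as in the elliptic case and the Keel--Tao scheme yields the full range of admissible Strichartz inequalities for the constant-coefficient model on $\R^2$. Covering $[0,1]$ by $\sim N$ disjoint subintervals of length $N^{-1}$ and summing in $L^p_t$ costs exactly the factor $N^{1/p}$, and summing the dyadic blocks recovers the $H^{1/p}$ norm on the right-hand side.

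For the sharpness, I would exploit the factorization $P=-(\partial_x-\partial_y)(\partial_x+\partial_y)$: every plane wave $e^{ik(x+y)}$ with $k\in\Z$ lies in the kernel of $P$ and is hence fixed by $e^{-itP}$. I would test (\ref{faux}) on
\[
u_0^{(N)}(x,y) = \sum_{k=1}^N e^{ik(x+y)} = g_N(x+y),
\]
so that $e^{-itP}u_0^{(N)} = u_0^{(N)}$ for all $t$. Fubini together with the Dirichlet-type bound $\|g_N\|_{L^q(\T)}\sim N^{1-1/q}$ gives $\|u_0^{(N)}\|_{L^q(\T^2)}\sim N^{1/2+1/p}$, using the admissibility relation $1/q=1/2-1/p$. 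On the other hand Plancherel yields $\|u_0^{(N)}\|_{H^s(\T^2)}^2\sim \sum_{k=1}^N k^{2s}\sim N^{2s+1}$. Uniformity of (\ref{faux}) in $N$ therefore forces $N^{1/2+1/p}\lesssim N^{s+1/2}$, which fails as soon as $s<1/p$.

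The step I expect to be most delicate is the semiclassical Strichartz estimate on $\T^2$, since one must check that the indefinite signature of $P$ does not interfere with either the construction of the parametrix or the dispersive decay of the $\R^2$ model. The saving point is that both of these depend only on the Hessian of the symbol, which is non-degenerate here; once this is verified, the remainder of the scheme is a routine adaptation of the compact-manifold Strichartz argument of \cite{BGT1}.
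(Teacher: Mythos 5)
Your proposal is correct, and its overall architecture (Littlewood--Paley reduction plus Minkowski, a dispersive estimate on the semiclassical time scale, the $TT^\star$ argument, and summation over $\sim N$ intervals of length $N^{-1}$) coincides with the paper's. The genuine difference lies in the step you yourself flag as delicate: you propose to prove the $|t|^{-1}$ kernel bound by redoing a two-dimensional WKB parametrix in the spirit of \cite{BGT1} for the indefinite symbol $\xi^2-\eta^2$, relying on the non-degeneracy of its Hessian. That works, but it is exactly the work the paper is designed to avoid: the authors choose $\psi$ so that $\psi(h^2\partial_x^2)\psi(h^2\partial_y^2)\varphi(h^2\Delta)=\varphi(h^2\Delta)$ and observe that the kernel of $\psi(h^2\partial_x^2)\psi(h^2\partial_y^2)e^{-itP}$ factors as the product of the one-dimensional kernels of $\psi(h^2\partial_x^2)e^{it\partial_x^2}$ and $\psi(h^2\partial_y^2)e^{-it\partial_y^2}$, each of which is $O(|t|^{-1/2})$ for $|t|\leq\alpha h$ directly by \cite[Lemma~2.5 and Remark~2.6]{BGT1} applied in the 1d (elliptic) case; no new stationary-phase or parametrix analysis is needed and the signature of $P$ never enters. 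Your route buys generality (it would extend to non-product non-elliptic operators with non-degenerate Hessian), while the paper's factorization buys a proof that is purely local and a citation of the known 1d result, which is the point of the note. For the sharpness, both arguments exploit the stationary solutions $f(x+y)$: you test with the Dirichlet-type kernel $\sum_{k=1}^N e^{ik(x+y)}$ and compare $N^{1/2+1/p}\lesssim N^{s+1/2}$, whereas the paper reduces (\ref{faux}) to the 1d Sobolev embedding $\|f\|_{L^q(\T)}\leq C\|f\|_{H^s(\T)}$ and defeats it for $s<\frac12-\frac1q$ with rescaled bumps $\eta(\lambda x)$; the two computations are equivalent and both force $s\geq \frac12-\frac1q=\frac1p$.
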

The above result in the particular case $p=q=4$ was recently obtained in \cite{W} by using a different approach using the special choice of the $L^4$ norm and global analysis on the torus. The proof of Theorem~\ref{thm} we present here relies only on local arguments.

The result of Theorem~\ref{thm} can be used in the study of nonlinear perturbations of (\ref{eq}). In particular the analysis of \cite{BGT1}, \cite{BGT2} implies the well-posedness 
in $H^s$, $s>1/2$ in the case of cubic perturbations.

Some Strichartz estimates with losses in the case of the elliptic Schr\"odinger equation
\begin{equation}\label{ell}
(i\partial_t +\partial_x^2+\partial_y^2)u=0,\quad u(0,x,y)=u_0(x,y),
\end{equation}
posed on $\T^2$, were obtained in \cite{B}. In the context of (\ref{ell}), it seems that we do not have a clear picture what are 
the optimal Strichartz estimates for all admissible couples $(p,q)$ (especially for $p<4$). The analysis in \cite{B} shows that in the particular case $p=q=4$ one has 
better than (\ref{strichartz}), namely $1/p=1/4$ can be replaced by every positive number which is almost the scale invariant estimate (the scale invariant estimate is however known to be false).   

 By adapting our proof of (\ref{strichartz}), the same Strichartz estimates as in \cite{BGT1} in dimension $\mathrm{dim}(M_1)+ \mathrm{dim}(M_2)$ may be proved for the equation
\[
 (i  \partial_t + \Delta_{M_1}- \Delta_{M_2})u=0, \qquad (x,y) \in M_1 \times M_2,
\]
where $M_1$ and $M_2$ are compact Riemannian manifolds. We however do not have a clear understanding about the optimality of the estimates in such a situation (except when $M_1=M_2$).

In \cite{SAL}, Salort proved Strichartz estimates for the operator $P$ with a loss of $1/p+ \varepsilon$ derivatives for all $\varepsilon >0$ but without addressing the question of optimality.

\section{Proof of Theorem~\ref{thm}}
\subsection{Proof of (\ref{strichartz})}
Let $\Delta=\partial_x^2+\partial_y^2$ be the Laplace operator. In the analysis, it is of importance that $\Delta$ commutes with $P$. As in \cite{BGT1} (see \cite[Corollary~2.3]{BGT1} and \cite[second part of page~583]{BGT1}), 
by using the Littlewood-Paley square function theorem and the Minkowski inequality, in order to prove (\ref{strichartz}), it suffices to prove that for every $\varphi\in C_0^{\infty}(\R)$, there exists $C>0$ such that for every $h\in (0,1]$, every $u_0\in L^2(\T^2)$,
\begin{equation}\label{LP}
\|\varphi(h^2 \Delta)e^{-itP}u_0 \|_{L^p_{t\in [0,1]}L^q_{xy}(\T^2)}\
\leq Ch^{-\frac{1}{p}}\|u_0\|_{L^2(\T^2)}\,.
\end{equation}
Let $\psi \in C_0^{\infty}(\R)$ be such that $\psi(-x^2)\psi(-y^2)$ equals one on the support of $\varphi(-x^2-y^2)$. 
Such a function exists since for a suitable $R>1$ the support of $\varphi(-x^2-y^2)$ is contained in the square $[-R,R]\times [-R,R]$ and thus it suffices to choose $\psi$
which equals one on $[-R^2,R^2]$. Then
\begin{equation*}
\psi(-x^2)\psi(-y^2)\varphi(-x^2-y^2)=\varphi(-x^2-y^2),\quad \forall\, (x,y)\in \R^2.
\end{equation*}
and hence
$$
\psi(h^2\partial_x^2)\psi(h^2\partial_y^2)\varphi(h^2\Delta)=\varphi(h^2\Delta).
$$
Therefore using the $L^2$ boundedness of $\varphi(h^2\Delta)$, we obtain that in order to get (\ref{LP}) it suffices to prove that  
for every $\psi\in C_0^{\infty}(\R)$, there exists $C>0$ such that for every $h\in (0,1]$, every $u_0\in L^2(\T^2)$,
\begin{equation}\label{prof}
\|\psi(h^2 \partial_x^2)\psi(h^2 \partial_y^2)e^{-itP}u_0 \|_{L^p_{t\in [0,1]}L^q_{xy}(\T^2)} \leq Ch^{-\frac{1}{p}}\|u_0\|_{L^2(\T^2)}\,.
\end{equation}
Let us denote by $K(t,x,y,x',y')$ the kernel of the map $\psi(h^2 \partial_x^2)\psi(h^2 \partial_y^2)e^{-itP}$, i.e.
$$
\big(\psi(h^2 \partial_x^2)\psi(h^2 \partial_y^2)e^{-itP}u_0\big)(t,x,y) =\int_{\T^2}K(t,x,y,x'y')u_0(x',y')dx'dy' \,.
$$
Then we have that
$$
K(t,x,y,x'y')=K_1(t,x,x')K_2(t,y,y'),
$$
where $K_1(t,x,x')$ is the kernel of $\psi(h^2 \partial_x^2)e^{it\partial_x^2}$ and 
$K_2(t,y,y')$ is the kernel of $\psi(h^2 \partial_y^2)e^{-it\partial_y^2}$. 
By \cite[Lemma~2.5 and Remark~2.6]{BGT1}, applied in the 1d case, we know that there exists $\alpha>0$ such that 
$$
|K_1(t,x,x')|\leq C|t|^{-1/2},\quad |K_2(t,y,y')|\leq C|t|^{-1/2},\quad \forall\, |t|\leq \alpha h\,.
$$
Consequently
$$
|K(t,x,y,x',y')|\leq C|t|^{-1},\quad \forall\, |t|\leq \alpha h\,.
$$
Thus we obtain that there exists $C>0$ such that for every $|t|\leq \alpha h$, every $u_0\in L^1(\T^2)$,
\begin{equation}\label{semi}
\|\psi(h^2 \partial_x^2)\psi(h^2 \partial_y^2)e^{-itP}u_0 \|_{L^\infty_{xy}(\T^2)}
\leq C|t|^{-1}\|u_0\|_{L^1(\T^2)}\,.
\end{equation}
With (\ref{semi}) in hand we can complete the proof of (\ref{strichartz}) exactly as in \cite[page~583]{BGT1}.
Indeed the $T-T^\star$ argument implies that for every interval $J$ of size $|J|\leq \alpha h$ one has 
\begin{equation}\label{J}
\int_{J}\|\psi(h^2 \partial_x^2)\psi(h^2 \partial_y^2)e^{-itP}u_0\|^p_{L^q(\T^2)}dt\leq C\|u_0\|^p_{L^2(\T^2)}.
\end{equation}
Next we cover $[0,1]$ with $N$ intervals of size $\leq \alpha h$, $N\sim h^{-1}$ and using $N$ times (\ref{J}) we infer that
\begin{eqnarray*}
\int_{0}^{1}\|\psi(h^2 \partial_x^2)\psi(h^2 \partial_y^2)e^{-itP}u_0\|^p_{L^q(\T^2)}dt
& \leq & \sum_{k=1}^{N} \int_{J_k}\|\psi(h^2 \partial_x^2)\psi(h^2 \partial_y^2)e^{-itP}u_0\|^p_{L^q(\T^2)}dt
\\
& \leq & Ch^{-1}\|u_0\|^{p}_{L^2(\T^2)}
\end{eqnarray*}
This completes the proof of (\ref{strichartz}).
\subsection{Optimality of the estimate}
Let $f\in H^s(\T)$. Then $f(x+y)\in H^s(\T^2)$ and $f(x+y)$ is a stationary solution of (\ref{eq}). Therefore if (\ref{faux}) holds then 
\begin{equation}\label{sob}
\|f\|_{L^q(\T)}\leq C\|f\|_{H^s(\T)}\,.
\end{equation}
Inequality (\ref{sob}) is the Sobolev embedding which is known to hold for $s\geq \frac{1}{2}-\frac{1}{q}$.
It is also well known that it fails for $s< \frac{1}{2}-\frac{1}{q}$ as shows the next lemma.
\begin{lemma}\label{scale}
Inequality (\ref{sob}) fails for $s< \frac{1}{2}-\frac{1}{q}$.
\end{lemma}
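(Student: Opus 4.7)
The plan is to disprove (\ref{sob}) by exhibiting a sequence of test functions whose $L^q$ and $H^s$ norms violate the inequality in the limit. Since $\T$ has finite volume, the obstruction cannot come from noncompactness at infinity and must instead come from concentration at a point, which suggests using a scaled bump as in the standard scaling heuristic on $\R$.

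Fix a nonzero $\chi\in C_c^\infty(\R)$ supported in $(-1,1)$, and for $\lambda\geq 1$ define $f_\lambda(x):=\chi(\lambda x)$, viewed as a function on $\T=(-\pi,\pi]$; for $\lambda\geq 1/\pi$ the support lies strictly inside $(-\pi,\pi)$, so $f_\lambda$ is a bona fide smooth function on $\T$. A change of variables immediately yields
\[
\|f_\lambda\|_{L^q(\T)}=\lambda^{-1/q}\|\chi\|_{L^q(\R)}.
\]

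The next step is to estimate $\|f_\lambda\|_{H^s(\T)}$ from above via Fourier series. The Fourier coefficients satisfy $\hat{f_\lambda}(n)=(2\pi\lambda)^{-1}\widehat{\chi}(n/\lambda)$, where $\widehat{\chi}$ denotes the Euclidean Fourier transform, so
\[
\|f_\lambda\|_{H^s(\T)}^2\lesssim \lambda^{-2}\sum_{n\in\Z}(1+n^2)^s\,|\widehat{\chi}(n/\lambda)|^2.
\]
Comparing this sum with the integral $\lambda\int_\R(1+\lambda^2 u^2)^s|\widehat{\chi}(u)|^2\,du$, which is legitimate thanks to the Schwartz decay of $\widehat{\chi}$ (split into $|n|\leq\lambda$ and $|n|>\lambda$), gives $\|f_\lambda\|_{H^s(\T)}\leq C_s\,\lambda^{s-1/2}$ for every real $s$. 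Combined with the first step, if (\ref{sob}) held we would obtain
\[
\lambda^{-1/q}\|\chi\|_{L^q(\R)}\leq C\,\lambda^{s-1/2},
\]
i.e. $\lambda^{1/2-1/q-s}$ would stay bounded in $\lambda\geq 1$; for $s<\tfrac12-\tfrac1q$ the exponent is positive, contradiction.

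The only genuinely nontrivial point is the $H^s$ estimate for non-integer $s$; for integer $s\geq 0$ it follows by directly differentiating $f_\lambda$ and changing variables, while for fractional $s$ one needs the Riemann-sum argument above. If one prefers to avoid fractional derivatives altogether, an alternative is to use the Dirichlet kernel $D_N(x)=\sum_{|n|\leq N}e^{inx}$ as the test family: then $\|D_N\|_{H^s(\T)}^2=\sum_{|n|\leq N}(1+n^2)^s\sim N^{2s+1}$ is immediate from the Fourier definition, while $\|D_N\|_{L^q(\T)}\sim N^{1-1/q}$ is classical, producing the same contradictory exponent $N^{1/2-1/q-s}$.
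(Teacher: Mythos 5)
Your proof is correct and takes essentially the same route as the paper: you test (\ref{sob}) with the concentrating bump $\chi(\lambda x)$ on $\T$ and compare the exponents $\lambda^{-1/q}$ and $\lambda^{s-1/2}$, exactly as the paper does, except that you verify the $H^s$ behaviour by Fourier series where the paper merely asserts it (one small caveat: the bound $\|f_\lambda\|_{H^s(\T)}\lesssim \lambda^{s-1/2}$ holds for $s>-1/2$, not for every real $s$, since the $n=0$ coefficient forces $\|f_\lambda\|_{H^s}\gtrsim\lambda^{-1}$; this is harmless because failure of (\ref{sob}) at some $s_0$ implies failure for all $s\leq s_0$ by monotonicity of the Sobolev norms). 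The Dirichlet-kernel variant you sketch is a legitimate alternative but is not needed.
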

\begin{proof}
It suffices to test (\ref{sob}) with
$$
f(x)=\eta(\lambda x),\quad \lambda\geq 1,\quad \eta\in C^\infty_0(-1/2,1/2)\,.
$$
We can see $f$ as a $C^\infty(\T)$ function and with this choice of $f$ the left hand-side of (\ref{sob}) behaves like
$\lambda^{-\frac{1}{q}}$ for $\lambda\gg 1$ while the right hand-side behaves like $\lambda^{s-\frac{1}{2}}$. Thus if (\ref{sob}) holds then
$
\lambda ^{-\frac{1}{q}}\lesssim \lambda ^{s-\frac{1}{2}}
$
which implies $s\geq \frac{1}{2}-\frac{1}{q}$.
\end{proof}
Using Lemma~\ref{scale}, we obtain that if (\ref{faux}) holds true then one should necessarily have
$$
s\geq \frac{1}{2}-\frac{1}{q}=\frac{1}{p}
$$
which proves the optimality of (\ref{strichartz}).

\noindent \textbf{Acknowledgements.} We thank the referee for useful remarks that improved this manuscript. The authors are supported by the ERC grant Dispeq.

\end{document}